\newtheorem{prop}{Proposition}[section]
\newtheorem{thm}[prop]{Theorem}
\newtheorem{cor}[prop]{Corollary}
\newtheorem{ques}[prop]{Question}
\newtheorem{quess}[prop]{Questions}
\theoremstyle{definition}
\newtheorem{de}[prop]{Definition}
\newtheorem{examples}[prop]{Examples}
\theoremstyle{remark}
\newtheorem{Remarks}[prop]{Remarks}             
\def\C{{\mathbb C}}
\def\Z{{\mathbb Z}}
\def\R{{\mathbb R}}
\def\int{\mathop{\rm int}\nolimits}
\def\id{\mathop{\rm id}\nolimits}
\def\SO{\mathop{\rm SO}\nolimits}
\def\co{\colon\thinspace}
\begin{document}
\title{Quotient manifolds of flows}
\author{Robert E. Gompf}
\address{The University of Texas at Austin, Mathematics Department RLM 8.100, Attn: Robert Gompf,
2515 Speedway Stop C1200, Austin, Texas 78712-1202}
\email{gompf@math.utexas.edu}
\begin{abstract} This paper investigates which smooth manifolds arise as quotients (orbit spaces) of flows of vector fields. Such quotient maps were already known to be surjective on fundamental groups, but this paper shows that every epimorphism of countably presented groups is induced by the quotient map of some flow, and that higher homology can also be controlled. Manifolds of fixed dimension arising as quotients of flows on Euclidean space realize all even (and some odd) intersection pairings, and all homotopy spheres of dimension at least two arise in this manner. Most Euclidean spaces of dimensions five and higher have families of topologically equivalent but smoothly inequivalent flows with quotient homeomorphic to a manifold with flexibly chosen homology. For $m\ge 2r>2$, there is a topological flow on $(\R^{2r+1}-8{\rm\ points})\times\R^m$ that is unsmoothable, although smoothable near each orbit, with quotient an unsmoothable topological manifold.
\end{abstract}
\maketitle


\section{Introduction}

To understand the global properties of flows induced by vector fields on manifolds, it seems natural to study the {\em quotient space} ({\em orbit space}) of a flow, the topological space whose points are the trajectories of the flow. This invariant has perhaps been neglected, since such spaces frequently fail to be Hausdorff or even $T_1$. However, one can still proceed in several directions. Even without the $T_1$ condition, one can still apply algebraic topology: Calcut, McCarthy and the author \cite{CGM} showed that the quotient map induces a surjection of fundamental groups (provided that the quotient is semilocally simply connected). For gradient-like vector fields, Calcut and the author \cite{CG} showed that the quotient map is a weak homotopy equivalence onto a locally contractible space. In the present paper we take a complementary approach, focusing on flows whose quotients have the nicest possible local properties, so are metrizable manifolds with induced smooth structures. That is, we require the simplest possible dynamics allowing closed orbits (and sometimes fixed points). In the leading question of his problem list \cite{A}, Arnol'd observed that every smooth manifold $R$ homeomorphic to $\R^4$ is the quotient of a flow on $\R^5$, since $R\times\R$ must be diffeomorphic to $\R^5$. Since there are uncountably many diffeomorphism types of such manifolds $R$ (the {\em exotic smoothings} of $\R^4$, see \cite{infR4}, \cite{GS}), we obtain uncountably many flows on $\R^5$ that are topologically trivial (conjugate by a homeomorphism to the product flow on $\R^4\times\R$) but smoothly distinguished from each other by the diffeomorphism type of the quotient space. This motivates the more general question of which $n$-manifolds can arise as quotients of flows on $\R^{n+1}$ or other simple open manifolds. One might expect that such quotients must be open, and perhaps, in light of the above homotopy theoretic results, that a manifold quotient of $\R^{n+1}$ must be contractible. However, Calcut \cite{CGM} constructed a quadratic vector field on $\R^{2m+1}$ with quotient $\C P^m$. The present paper shows that this is a much more general phenomenon. We construct a wide variety of smooth $n$-manifolds, both open and closed, as quotients of $\R^{n+1}$ and other simple manifolds, with quotients exhibiting a broad range of homology, intersection forms and exotic smooth structures. 

The manifold condition on quotients tightly constrains the local structure of flows, but all of our examples are locally even more tightly controlled. To create interesting topology, we allow circles as orbits. However, we require their Poincar\'e maps to be the identity, so they have neighborhoods realized as trivial circle bundles. All of the compact orbits appearing in Section~\ref{2h} have this form. While the homotopy types of circle bundles are highly constrained by the long exact homotopy sequence, our flows with only some circle orbits have much more flexibility, allowing us to control the fundamental group and second Betti number of the quotient, subject only to $\pi_1$-surjectivity of the quotient map. (The same method also yields examples of higher-dimensional foliations; see the final paragraph of this paper.) In Section~\ref{Higher}, we gain control of higher homology by also allowing fixed points. The manifold quotient condition tightly constrains the fixed set. However, we canonically obtain a smooth manifold quotient from a codimension-4 fixed set whose meridians are 3-spheres with the Hopf circle action: The meridians descend to meridian 2-spheres, so the fixed set descends to have codimension three. The quotient map has a quadratic local model, and can be thought of as a generalization of a 2-fold branched covering map.

To construct our examples, we consider quotient manifolds as handlebodies (with open handles). Calcut exhibited $\C P^m$ as the quotient of a flow on $\R^{2m+1}$ by starting with the Hopf fibration $S^{2m+1}\to\C P^m$ and removing a point from the domain. Note that $\C P^m$ is built with a handle of each even index through $2m$, but the lifts of all but the 0-handle are ``invisible'' in the domain $\R^{2m+1}$. Since quotient maps of flows are always surjective on $\pi_1$ (when the quotient is a manifold and hence semilocally simply connected), it is impossible to have 1-handles that are invisible in this manner. However, close inspection of the invisible 2-handle in Calcut's example reveals how it functions, allowing us to construct such 2-handles more generally. We investigate such invisible 2-handles in Section~\ref{2h}. Then we construct higher-index handles in Section~\ref{Higher} by replacing the Hopf map $S^3\to\C P^1=S^2$ by its iterated suspensions. The resulting invisible $k$-handles have nonempty fixed-point sets of the sort described previously, but allow us to control higher homology of the smooth quotient. It remains an open question whether the higher-index handles of the Hopf map to $\C P^m$ can be exploited to yield fixed-point free flows with invisible $k$-handles for even $k>2$ (see Section~\ref{Further}).

Our main theorems assert that if an $n$-manifold $V$ is obtained from another manifold $U$ by attaching a layer of open handles of index at least two, then it is the quotient of a flow on $U\times\R$. (See Theorem~\ref{main} for 2-handles and fixed-point free flows, and Theorem~\ref{high} in general.) We immediately see (Corollaries~\ref{contr} and \ref{highapps}(a)) that when $U$ is contractible, $V$ is the quotient of some flow on $\R^{n+1}$.  This gives almost complete control of the homology and intersection pairing of quotient manifolds of flows on $\R^{n+1}$ (Example~\ref{examples}(a) and Corollary~\ref{highapps}(b)) although they must be simply connected. We also obtain  every compact homotopy $n$-sphere ($n\ge2$) as such a quotient (Examples~\ref{spheres}(a,c)). Section~\ref{2h} gives further applications of invisible 2-handles, which are expanded to higher homology in Section~\ref{Higher} (Corollary~\ref{highapps} and Remarks~\ref{highapprem}). Corollary~\ref{group} gives a counterpoint to $\pi_1$-surjectivity of quotient maps: {\em Every} epimorphism of countably generated groups is induced by the quotient map of some flow with manifold quotient (of fixed dimension four or larger), and for many manifolds $U$, there are such flows on $U\times\R$ realizing all quotients of $\pi_1(U)$. Corollary~\ref{all} observes that every oriented $n$-manifold admitting a proper Morse function, bounded below, whose indices are at most two, is a quotient of a flow on $(\R^2-S)\times\R^{n-1}$ for some discrete, closed $S$, with a similar result in the nonorientable case. In particular, every oriented surface (possibly closed or with infinite topology) is a quotient of a flow on $\R^3$ minus parallel lines, every oriented, open 3-manifold is a quotient of $\R^4$ minus parallel planes, and various related results hold for 4- and 5-manifolds (Examples~\ref{low}).

The tools developed in this paper allow a deeper study of exotic flows of the sort first introduced in dimension five by Arnol'd. We call two flows {\em topologically} (respectively {\em smoothly}) {\em equivalent} if they are conjugate by a homeomorphism (resp.\ diffeomorphism). ``Most'' smooth 4-manifolds $X$ have infinitely many diffeomorphism types of exotic smoothings (uncountably many in the noncompact case), such that the corresponding smoothings on $X\times\R$ are diffeomorphic. As Arnol'd observed, we can interpret these as topologically equivalent but smoothly inequivalent flows on the 5-manifolds $X\times\R$, distinguished by the diffeomorphism types of their quotients. Since these are product flows, their quotient maps are homotopy equivalences. In contrast, we show (Example~\ref{examples}(b), cf.\ also Corollary~\ref{last}) that $\R^5$ has infinitely many topologically equivalent but smoothly inequivalent flows with quotient realizing any preassigned intersection form, and some forms are realized by uncountable families. All of these 5-dimensional families, like those of Arnol'd, are unstable under Cartesian product with $\R$. That is, the flows within each family become smoothly equivalent if we take their product with the flow of velocity zero on $\R$ (or any other manifold of positive dimension). Using handles of higher index (Examples~\ref{spheres}) we can instead construct finite families of exotic flows on $\R^{n+1}$ that are stable under product with $\R$ (although their behavior under iterated stabilization, i.e., product with $\R^m$, is less clear). On the other hand, Example~\ref{spheres}(e) exhibits unsmoothable topological flows on smooth manifolds, i.e., 1-parameter groups of homeomorphisms that are not conjugate by a homeomorphism to a smooth flow on any manifold. We find such flows on $(\R^{2r+1}-8{\rm\ points})\times\R^m$ whenever $m\ge 2r>2$ that are unsmoothable since their quotient manifolds are, but are smoothable in a neighborhood of each orbit.

Except where otherwise stated, all manifolds in this paper are assumed to be metrizable, smooth, connected and without boundary. Disks, particularly the disks $D^k$ and $2D^k$ of radius one and two, are compact, and handlebodies have boundaries coming from the boundaries of their compact handles. However, we will also consider open handles attached at infinity to open manifolds. Embeddings are smooth, injective immersions, and need not be proper. An annulus is the difference of two concentric open or closed disks of the same dimension, diffeomorphic to the product of a sphere with an interval or ray. Flows will be smooth ($C^\infty$) and complete (defined for all time) with fixed-point sets of codimension four, and those of Section~\ref{2h} will be fixed-point free. The quotient manifolds will necessarily be orientable if and only if the domains are orientable.


\section{Invisible 2-handles}\label{2h}

Our main results are obtained by modifying a flow to add handles to its quotient without changing the diffeomorphism type of the manifold supporting the flow. We begin by adapting the notion of attaching handles to the setting of open manifolds, in a way that is both more general than the compact version and more convenient for our purposes.

\begin{figure}
\labellist
\small\hair 2pt
\pinlabel $D^k$ at 93 192
\pinlabel $2D^k$ at 92 212
\pinlabel $\R^k$ at 22 183
\pinlabel $\R^{n-k}$ at -2 164
\pinlabel $\R^k\times\R^{n-k}$ at 90 120
\pinlabel $U$ at 23 38
\pinlabel 0 at -3 145
\pinlabel {End of $U$} at 84 65
\pinlabel $i$ at 193 42
\pinlabel $j$ at 186 154
\pinlabel $i(U)$ at 185 103
\pinlabel $j(\R^k\times\R^{n-k})$ at 307 164
\pinlabel $V$ at 228 174
\pinlabel {Annulus $j((2D^k-D^k)\times\{0\})$} at 256 22
\endlabellist
\centering
\includegraphics{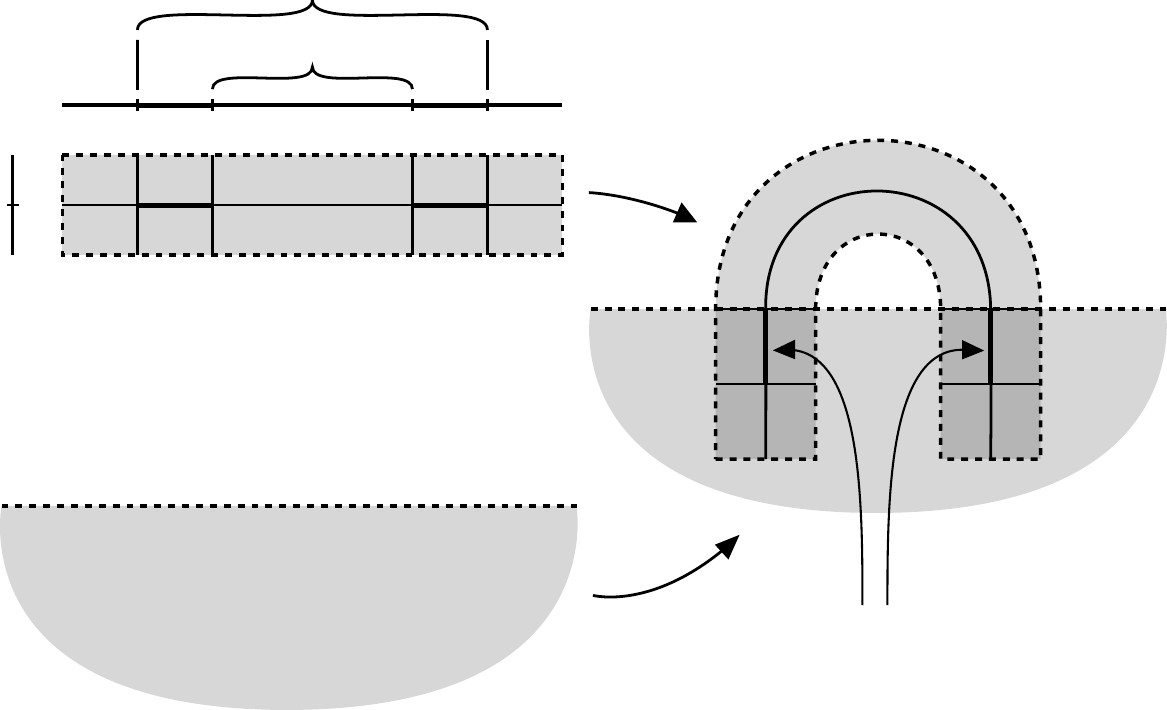}
\caption{Obtaining a manifold $V$ from $U$ by attaching a $k$-handle at infinity.}
\label{handle}
\end{figure}

\begin{de} (See Fig.~\ref{handle}.) An $n$-manifold $V$ is obtained from another $n$-manifold $U$ by {\em attaching a layer of handles at infinity} if there are embeddings $i$ of $U$ and $j$ of a (possibly infinite) disjoint union of copies of $\R^k\times\R^{n-k}$ into $V$, with images covering $V$, such that $j^{-1}(i(U))$ consists of all of the copies of $(\R^k-D^k)\times\R^{n-k}$. Each  copy of $\R^k\times\R^{n-k}$ will be called an {\em open $k$-handle}, and if the index $k$ is held fixed, $V$ is obtained by {\em attaching $k$-handles at infinity}.
\end{de}

\noindent Thus, $j$ embeds each copy of $(\R^k-D^k)\times\R^{n-k}$ into $i(U)\approx U$ as a tubular neighborhood  of the annulus $j((\R^k-D^k)\times\{0\})$. The resulting proper embeddings of the half-open annulus $2D^k-D^k$ into $U$, with their induced normal framings, are enough to reconstruct $V$ up to diffeomorphism. In fact, any countable collection of disjoint, proper, framed embeddings of half-open annuli determines a manifold obtained from $U$ by adding a layer of handles. (After suitably truncating the embeddings, one can assume that every compact subset of $U$ intersects only finitely many annuli.) The usual notion of attaching a layer of handles to the boundary of a compact manifold has the effect of attaching handles at infinity to the interior. However, the noncompact notion is more flexible. For example, we can simultaneously attach infinitely many handles or attach to ends with infinite topology. The simplest case of the latter is to join two components of a disconnected $U$ by a 1-handle. The result is called an {\em end-sum} of the two components \cite{infR4}, \cite{GS} or a {\em connected sum at infinity} \cite{CH}. End-summing with an exotic $\R^4$ (with infinite topology) is useful for creating exotic smooth structures on open 4-manifolds. For a detailed study of 1-handles at infinity, see \cite{CG2}.

\begin{examples}\label{handles} (a) For a (connected) $n$-manifold $X$ with a proper Morse function $\varphi\co X\to[0,\infty)$, we obtain a nest $\R^n=X_0\subset X_1\subset\cdots\subset X_n=X$, where each $X_k$ with $k>0$ is obtained from $X_{k-1}$ by adding a layer of $k$-handles at infinity, which correspond bijectively to the index-$k$ critical points of $\varphi$ after extra index-0 critical points have been canceled. This is immediate from standard handle theory if there are only finitely many critical points, and the general case follows from the same ideas: Use induction on decreasing $k$. For $k>0$, let $X_k$ be the complement of the ascending manifolds of the critical points of index exceeding $k$. For $X_0$, cut $X_1$ back to a (thickened) maximal tree and note that a nested union of $n$-balls is diffeomorphic to $\R^n$ with a standard nest of round balls. For later use, note that the attaching rays of the 1-handles can be arranged to be radial in this structure, avoiding the difficulty that when $n=3$, uncountably many nondiffeomorphic contractible 3-manifolds can be made by end-summing two copies of $\R^3$ along knotted rays (Myers \cite{M}).

(b) Any symmetric bilinear form over a countably generated, free $\Z$-module can be realized as the intersection form of an open 4-manifold obtained from $\R^4$ by adding 2-handles at infinity. To see this, first inductively construct a family $\{L_m|m\in\Z^{\ge0}\}$ of framed links in $S^3$, with $L_0$ empty and each subsequent $L_m$ obtained from $L_{m-1}$ by adding a knot $K_m$, so that the linking pairing of $L_m$ is the given bilinear form restricted to the first $m$ generators. Then attach a 2-handle along each annulus $K_m\times [m,\infty)\subset S^3\times (0,\infty)\approx \R^4-\{0\}\subset\R^4$.

(c) There are open 4-manifolds that cannot be interiors of compact 4-manifolds with boundary, but are obtained (for example) by attaching a single 2-handle to $\R^4$ at infinity. Let $X^4$ be the interior of a handlebody whose handles all have index two or less. (For a simple example, consider an $\R^2$-bundle over $S^2$, coming from a 0-handle with a 2-handle attached.) Then we can typically construct exotic smoothings on $X$ \cite{MinGen} by replacing the 2-handles by {\em Casson handles}, which are homeomorphic to open 2-handles. One can distinguish infinitely many diffeomorphism types of such smoothings on $X$ if, for example, $H_2(X)\ne 0$, and sometimes one can distinguish uncountably many such types. The operation of replacing 2-handles by Casson handles can alternatively be described as reattaching ordinary 2-handles at infinity along exotic proper annuli: Each 2-handle $h$ of $X$ can be rewritten as a 1-handle and a pair of 2-handles $h_1$ and $h_2$, each going once over the 1-handle, so that the 1-handle cancels either 2-handle and leaves the other 2-handle as $h$. If we replace $h_1$ by a Casson handle $CH$, then $h_2$ cancels the 1-handle, and the net result is to replace $h$ by $CH$ as desired. On the other hand, we can think of this new smoothing as being obtained from $X-h$ by attaching the 1-handle and Casson handle to obtain an intermediate open manifold $X_0$, then attaching $h_2$ to $X_0$ at infinity. We can absorb the 1-handle into $CH$, so that $X_0$ is obtained by attaching $CH$ to $X-h$ along only half of its attaching region. Thus, $X_0$ is the end-sum of $X-h$ with $\int CH$. But every Casson handle interior is diffeomorphic to $\R^4$ (by Freedman \cite{F}, cf.\ also \cite{GS}~Exercises~9.4.1(b,c)), so $X_0$ is diffeomorphic to $X-h$, and the new smoothing is obtained by attaching $h_2$ to $X-h$ at infinity. As a simple example, the $\R^2$-bundle over $S^2$ with Euler class one admits uncountably many such smoothings (\cite{MinGen}~Theorem~7.7), each obtained by attaching a 2-handle to $\R^4$ at infinity, and which cannot be interiors of compact smooth manifolds with boundary. (Thus, any proper Morse function on one of these requires infinitely many critical points.) The 2-handle attaches along a proper annulus in $\R^4$ that cannot arise from a compact annulus in any compactification of $\R^4$ as a smooth ball, even though the proper annulus becomes the standard $\R^2-\int D^2$ under some (nonsmooth) self-homeomorphism of $\R^4$.
\end{examples}

We now state our main theorem for constructing invisible 2-handles. This is motivated by the invisible 2-handle implicit in Calcut's flow with quotient $\C P^1$ (cf.\ Section~\ref{Higher}).

\begin{thm}\label{main} Suppose that an $n$-manifold $V$ is obtained from another $n$-manifold $U$ by attaching 2-handles at infinity. Then there is a smooth, fixed-point free flow on $U\times\R$ with quotient $V$ and quotient map homotopy equivalent to the embedding $i\co U\hookrightarrow V$.
\end{thm}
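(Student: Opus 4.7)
My plan is to prove the theorem handle-by-handle, reducing to the case of a single 2-handle with framed half-open attaching annulus $A\subset U$, since the open 2-handles occupy disjoint tubular neighborhoods in $V$ and their flow modifications on $U\times\R$ can be performed in disjoint open subsets independently. By properness of the handle attaching map, both ends of $A$ escape to infinity of $U$.

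The paradigm I would generalize is Calcut's quadratic vector field on $\R^3$ whose flow has quotient $\CP^1\cong S^2$, realizing the $U=\R^2$, $V=S^2$ instance of the theorem (since $S^2$ is $\R^2$ with one 2-handle attached at infinity). Calcut's flow is the Hopf $S^1$-action on $S^3$, restricted (after a completable reparametrization) to $\R^3\cong S^3-\{p_0\}$: almost every orbit is a Hopf circle, while the fiber through $p_0$ loses $p_0$ to become a single line orbit, and the quotient remains $\CP^1$. For a general 2-handle I would identify an open neighborhood $W\subset U\times\R$ of a properly embedded line $L$ (lying in a tubular neighborhood of $A\times\R$) with $\R^3\times\R^{n-2}$ via the framing, and install Calcut's vector field on the first factor crossed with zero on the second. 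The local quotient of $W$ is $S^2\times\R^{n-2}$; one hemisphere of each $S^2$-slice is the new handle disk, and the other hemisphere matches a collar of the attaching region inside $i(U)$.

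The main obstacle is smoothly extending this vector field over all of $U\times\R$ so that the flow agrees with $\partial_t$ outside $W$ and the global quotient is $V$. A naive bump-function interpolation near $\partial W$ produces helical, non-closed orbits in the transition layer, which introduces a spurious $S^1$-factor in the leaf space and destroys the manifold structure of the quotient. I would resolve this by using the noncompactness of $A$'s ends: $W$ can be arranged so that its boundary sits near infinity of $U$, and by choosing the stereographic identification $S^3-\{p_0\}\cong\R^3$ compatibly with the handle's framing, the Calcut vector field along a collar of $\partial W$ can be smoothly matched to $\partial_t$ without any bump transition. The matching relies on the fact that, in appropriate cylindrical coordinates along the end of $A$, the Calcut vector field and $\partial_t$ share the same asymptotic form up to a completable reparametrization of time.

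Once the global vector field is constructed, the quotient equals $V$ by a direct check: the local Calcut quotient over $W$ and the product-flow quotient over $(U\times\R)\setminus W$ glue along the attaching annulus to reproduce exactly the handle attachment that defines $V$. The quotient map is homotopy equivalent to $i\circ\pi$ via a straight-line homotopy in a Euclidean chart of $V$ containing the handle: outside $W$ the two maps agree, and inside $W$ they both land in the contractible region $j(\R^2\times\R^{n-2})\subset V$, where linear interpolation yields the required homotopy.
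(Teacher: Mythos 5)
The student's proposal is not the paper's proof, and the key step it rests on is asserted rather than established. The paper does \emph{not} install Calcut's Hopf-minus-a-point flow in a sub-region of $U\times\R$ and try to merge it with $\partial_t$. Instead it builds a new $(n+1)$-manifold $X$ by gluing $U\times\R$ to $\R^2\times\R^{n-2}\times S^1$ along the image of the diagonal embedding $(w,t)\mapsto(\psi^{-1}(w),\iota(t)+\theta\circ\psi^{-1}(w))$. This is a genuine construction of a smooth line field by gluing: $\partial_t$ on $U\times\R$ pushes forward to a positive multiple $\iota'(t)\,\partial_\phi$ of $\partial_\phi$, so the line field is automatically smooth with no interpolation to justify; completeness is a matter of choosing a suitable tangent vector field. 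The diffeomorphism $X\cong U\times\R$ is then obtained by post-composing the gluing with the shearing $(r,\theta,x,\phi)\mapsto(r,\theta-\phi,x,\phi)$, which straightens the diagonal annulus so that the attached piece becomes a tubular neighborhood of a standard half-open annulus, absorbable into $U\times\R$ by uniqueness of tubular neighborhoods.

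Your proposal correctly identifies the central difficulty --- that a bump-function transition between Calcut's flow on $W$ and $\partial_t$ outside would produce helical orbits and wreck the quotient --- but then claims that this can be avoided outright: ``the Calcut vector field along a collar of $\partial W$ can be smoothly matched to $\partial_t$ without any bump transition.'' That claim is the whole proof, and it is not demonstrated. Inside $W\cong\R^3\times\R^{n-2}$ the orbits near the frontier $\partial W$ are still \emph{circles} of Calcut's flow, while just outside $\partial W$ the $\partial_t$-orbits are lines; matching a vector field $C^\infty$ across $\partial W$ requires that the Calcut field extend smoothly with the correct value there, and nothing in the remark about ``same asymptotic form in cylindrical coordinates'' establishes this. (After reparametrizing to make the restricted Hopf flow complete on $\R^3$, the field's direction does align with the line orbit's direction asymptotically, but its magnitude and all derivatives must also converge in the ambient smooth structure of $U\times\R$, and the precise choice of embedding $W\hookrightarrow U\times\R$ on which this depends is never specified.) There is also a more global issue: if the Calcut flow on $W$ were to restrict the global flow, every Calcut orbit is a global orbit, so the quotient map embeds $W/\mathrm{Calcut}\cong S^2\times\R^{n-2}$ openly into $V$, with the handle core as one ``hemisphere'' and the other lying in $i(U)$ bounding the attaching circle. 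In general the attaching circle of a 2-handle attached at infinity does not bound a disk in $U$ (that is precisely the interesting case, e.g.\ when the handle kills a $\pi_1$ generator), so this local picture cannot be globally realized. The paper's gluing construction sidesteps both problems: the circle orbits over the attaching annulus never become lines within a single chart, and $q(W')$ is $\R^n$, not $S^2\times\R^{n-2}$.
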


\begin{proof} We lift each 2-handle by taking its product with $S^1$ and gluing along a diagonal circle generating the resulting homology. Let $(r,\theta)$ denote the polar coordinate functions on $\R^2$, and choose an embedding $\iota\co\R\to S^1=\R/\Z$ with $\iota(0)=0$. Each 2-handle of $V$ is attached to $U$ using an embedding $\psi\co(\R^2-D^2)\times \R^{n-2}\to U$ with image $W$. For each 2-handle, glue a copy of $\R^2\times\R^{n-2}\times S^1$ onto $U\times\R$ by the embedding $W\times\R\to\R^2\times\R^{n-2}\times S^1$ given by $(w,t)\mapsto(\psi^{-1}(w),\iota(t)+\theta\circ\psi^{-1}(w))$. On the resulting $(n+1)$-manifold $X$, the tangents to the $\R$ and $S^1$ fibers fit together into a global oriented line field, which supports a complete vector field whose flow has quotient $V$. To diffeomorphically identify $X$ with $U\times\R$, compose each gluing map with the self-diffeomorphism of $\R^2\times\R^{n-2}\times S^1$ given by $(r,\theta,x,\phi)\mapsto (r,\theta-\phi,x,\phi)$. Then the annulus $\psi((\R^2-D^2)\times\{0\})\times\{0\}$ in $U\times\R$ is sent to the annulus $r>1$, $\theta=0$, $x=0$, $\phi$~arbitrary in $\R^2\times\R^{n-2}\times S^1$. Since the latter manifold is just a tubular neighborhood of the annulus, it is routine to construct a diffeomorphism absorbing it into $U\times\R$, which is then identified with $X$.
\end{proof}

\begin{cor}\label{contr} If an $n$-manifold $V$ is obtained from a contractible manifold $U$ by attaching 2-handles at infinity, then $V$ is the quotient of some fixed-point free flow on $\R^{n+1}$.
\end{cor}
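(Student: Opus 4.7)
The plan is to invoke Theorem~\ref{main} directly and then identify the total space $U\times\R$ with $\R^{n+1}$. Since $V$ is built from $U$ by attaching 2-handles at infinity, Theorem~\ref{main} immediately furnishes a smooth, fixed-point free flow on $U\times\R$ with quotient $V$. Transporting this flow across any diffeomorphism $U\times\R\cong\R^{n+1}$ then yields the required fixed-point free flow on $\R^{n+1}$ with quotient $V$; the corollary therefore reduces to producing such a diffeomorphism.

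To exhibit the diffeomorphism I would argue that $U\times\R$ is contractible (because $U$ is) and is simply connected at infinity: any loop lying in the complement of a large compact set can be pushed along the $\R$ factor into a cylindrical end $K\times[T,\infty)$, where $K\subset U$ is compact, and then contracted after sliding it further out along $\R$. For $n+1\ge 5$ this is enough to conclude via Stallings's characterization of Euclidean space that $U\times\R$ is homeomorphic to $\R^{n+1}$, and uniqueness of smooth structure on $\R^k$ for $k\ne 4$ upgrades this to a diffeomorphism. The remaining low-dimensional cases are handled separately: for $n\le 2$ the classification of contractible open $n$-manifolds already gives $U\cong\R^n$ outright; for $n=3$ one appeals to the classical fact that a contractible open 3-manifold crossed with $\R$ is the standard smooth $\R^4$; and for $n=4$ this is precisely Arnol'd's observation, recalled in the introduction, that $R\times\R\cong\R^5$ for any smoothing $R$ of $\R^4$.

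There is no serious obstacle in this argument, since Theorem~\ref{main} has already done the dynamical work and the identification $U\times\R\cong\R^{n+1}$ is essentially folklore. The mildly delicate point is the low-dimensional verification, especially $n=3$, where $U$ itself need not be $\R^3$ (witness the Whitehead manifold), so one genuinely needs the product-with-$\R$ improvement to standardize the ambient smooth manifold before pushing the flow over to $\R^{n+1}$.
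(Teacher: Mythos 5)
Your overall strategy is exactly the paper's: apply Theorem~\ref{main} to produce the flow on $U\times\R$, then reduce to the standard fact that $U\times\R$ must be diffeomorphic to $\R^{n+1}$; the paper simply cites Chernov--Nemirovski \cite{CN} for this, whereas you sketch the argument. Your Stallings argument for $n+1\ge 5$ is sound (the van~Kampen computation showing $U\times\R$ is $1$-connected at infinity, pushing loops into a cylindrical end and contracting them there, does go through), and the cases $n\le 2$ are trivial as you say.

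Two points in your low-dimensional discussion are off, though. First, your $n=4$ case is both redundant and wrong as stated: it is already covered by the Stallings argument since $n+1=5$, and Arnol'd's observation is about manifolds \emph{homeomorphic} to $\R^4$, whereas a contractible open $4$-manifold $U$ need not be simply connected at infinity (e.g.\ the interior of a Mazur manifold), so it is not in general a smoothing of $\R^4$. Second, calling the $n=3$ case a ``classical fact'' understates it: the statement that $U^3\times\R$ is diffeomorphic to $\R^4$ for every contractible open $3$-manifold $U$ rests on McMillan's work \emph{plus} the Poincar\'e Conjecture, i.e.\ Perelman, which the paper explicitly flags. Neither of these affects the correctness of the conclusion, but both should be stated accurately; with those adjustments your proof matches the paper's.
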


\noindent For each $n\ge 3$, there are uncountably many homeomorphism types of contractible, smooth $n$-manifolds (see Glaser \cite{G}), distinguished by their fundamental group at infinity.

\begin{proof} It suffices to observe that $U\times\R$ must be diffeomorphic to $\R^{n+1}$. This follows by combining work of McMillan and Perelman ($n=3$) and Stallings ($n>3$). See \cite{CN} for details.
\end{proof}

\begin{examples}\label{examples} (a) By Example~\ref{handles}(b) and Theorem~\ref{main}, there is a flow on $\R^5$ whose quotient is an open 4-manifold $V$ with intersection form realizing any preassigned symmetric bilinear form over a countably generated, free $\Z$-module. If the module is finitely generated, then $V$ is the interior of a compact manifold with boundary, namely, a 4-ball with finitely many 2-handles attached.

(b) Let $X$ be the interior of a 4-manifold obtained by attaching 2-handles to a 4-ball. Then all of the exotic smoothings of $X$ made by replacing 2-handles by Casson handles as in Example~\ref{handles}(c) are quotients of flows on $\R^5$. Choosing $X$ more generally as in (a) above, we can realize any symmetric form over a countably generated, free $\Z$-module by infinitely many topologically equivalent but smoothly distinct flows on $\R^5$. (The rank-0 case is covered by Arnol'd's examples.) If the module is not finitely generated, we obtain uncountably many such flows (see \cite{MinGen}). Uncountable families for some finitely generated modules can be constructed similarly, from \cite{MinGen} Theorem~7.1 ($\widetilde X=X$) or 7.7.

(c) In the above examples, we can replace the 4-ball by any other contractible 4-manifold. (For distinguishing flows as in (b), assume no 3-handles.) Many contractible 4-manifolds require 1-handles in any handle decomposition. Thus, a quotient of $\R^5$ by a flow may be forced to have 1-handles, even though it must be simply connected.
\end{examples}

\begin{cor}\label{group} (a) For every $n\ge 4$, countably generated group $G$ and epimorphism $f\co G\to H$, there is an oriented $(n+1)$-manifold with a fixed-point free flow for which the quotient is an $n$-manifold and the $\pi_1$-epimorphism induced by the quotient map is given by $f$.

(b) Let $U$ be an oriented manifold of dimension $n\ge4$ with a proper Morse function $\varphi\co U\to\R$ whose critical points have index at most $n-2$. Then $U\times\R$ has flows as in (a) realizing every quotient of $\pi_1(U)$.
\end{cor}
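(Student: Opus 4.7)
The plan is to reduce (a) to (b) by constructing a suitable $U$, and then to prove (b) by building a manifold $V\supset U$ from $U$ via attaching 2-handles at infinity and applying Theorem~\ref{main}. Since the quotient map furnished by Theorem~\ref{main} is homotopy equivalent to the inclusion $i\co U\hookrightarrow V$, it suffices to arrange $i$ to induce the desired epimorphism on $\pi_1$; then $V$ serves as the quotient of the flow on $U\times\R$.

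For (a), fix a countable presentation $G=\langle g_1,g_2,\ldots\mid r_1,r_2,\ldots\rangle$ and build an oriented $n$-manifold $U$ with $\pi_1(U)\cong G$ from $\R^n$ by attaching at infinity one 1-handle per generator $g_i$ and one 2-handle per relation $r_j$, arranged locally finitely. Van Kampen identifies $\pi_1(U)$ with $G$, and the construction equips $U$ with a proper Morse function whose indices are at most two, hence at most $n-2$ since $n\ge4$; part (b) then applies and yields the required flow on the oriented $(n+1)$-manifold $U\times\R$.

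For (b), set $K=\ker f$. The countable handle decomposition of $U$ provided by Example~\ref{handles}(a) makes $\pi_1(U)$ countably generated, so $K$ is the normal closure of a countable set $\{k_l\}_{l\ge0}$. Represent each $k_l$ by a smoothly embedded loop $\gamma_l\subset U$, and extend each $\gamma_l$ to a proper, framed, smoothly embedded half-open annulus $A_l\co S^1\times(0,1]\hookrightarrow U$ running out to an end of $U$, making the family $\{A_l\}$ pairwise disjoint and locally finite. These data determine a layer of 2-handles at infinity producing $V\supset U$; a van Kampen argument, using that the core disk of each 2-handle caps off $\gamma_l$, identifies $\pi_1(V)$ with $\pi_1(U)/K=H$ with the inclusion inducing $f$, and Theorem~\ref{main} then delivers the smooth, fixed-point free flow on $U\times\R$ with quotient $V$.

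The main obstacle is the simultaneous construction of the family $\{A_l\}$: each $A_l$ is built by tubing $\gamma_l$ along a proper ray going to an end of $U$, but the infinitely many tubes must be coordinated to be mutually disjoint and locally finite while preserving their respective homotopy classes. This is where both $n\ge 4$ and the index bound on $\varphi$ enter. Invoking Example~\ref{handles}(a), $U$ is exhausted by the nested sequence $\R^n=X_0\subset\cdots\subset X_{n-2}=U$, so the tubing rays can be routed through the handle structure of $U$ and ultimately into $X_0\approx\R^n$, where they escape to infinity along radial directions; and transversality in ambient dimension $n\ge 4$ (two-dimensional annuli lying in codimension at least two) supports the inductive perturbations needed to achieve the required disjointness. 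Framings exist and are essentially unique since $U$ is oriented and each annulus is simply connected, and $V$ inherits an orientation from that of $U\times\R$.
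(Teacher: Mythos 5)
Your overall strategy for both parts matches the paper's: construct $V$ from $U$ by attaching 2-handles at infinity whose attaching circles normally generate $\ker f$, then invoke Theorem~\ref{main}. Part (a) is essentially correct and is the paper's argument (up to cosmetic differences in how the free part of $\pi_1$ is produced; end-summing copies of $S^1\times\R^{n-1}$ is the same as attaching 1-handles to $\R^n$ at infinity).

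The gap is in your treatment of part (b). You invoke Example~\ref{handles}(a) to produce a nest $\R^n=X_0\subset\cdots\subset X_{n-2}=U$ and propose routing the attaching annuli down into $X_0\approx\R^n$ and then out radially. But Example~\ref{handles}(a) requires a proper Morse function $\varphi\co X\to[0,\infty)$, i.e.\ bounded below. In Corollary~\ref{group}(b), $\varphi\co U\to\R$ need not be bounded below, and the paper explicitly flags this in the note following the statement (``$\varphi$ need not be bounded below, so $U$ is constructed from some (possibly empty) $I\times X^{n-1}$\ldots''). In that setting there is no $X_0\approx\R^n$ to route into, so your construction of proper annuli does not apply. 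Even when $\varphi$ happens to be bounded below, it is not automatic that a ray escaping radially to infinity in the open dense subset $X_0$ is proper in $U$, since points of $U-X_0$ are also in the closure of $X_0$.

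The paper's argument avoids both issues: take circles in $U$ generating $\ker f$ (as normal subgroup), generically perturb them to miss all ascending manifolds of critical points of $\varphi$ — this is where the index bound $\le n-2$ enters, since those ascending manifolds then have dimension $\le n-2$, hence codimension $\ge 2$, so a generic 1-dimensional circle misses them — and then follow the upward gradient flow of $\varphi$. Every point on such a circle flows to the top end of $U$ by properness of $\varphi$, so the trace of the circle is a proper framed half-open annulus, exactly what is needed to attach a 2-handle at infinity. This works whether or not $\varphi$ is bounded below. Your proposal uses the index bound only indirectly and, in effect, assumes a special form of $\varphi$ that the hypothesis does not grant.
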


\noindent Note that $\varphi$ need not be bounded below, so $U$ is constructed from some (possibly empty) $I\times X^{n-1}$ by attaching (possibly infinitely many) handles of index at most $n-2$ above and of index at least two below.

\begin{proof}  For (a), note that $G$ is a countable set. Since $n\ge 4$, there is an $n$-manifold $U$ with $\pi_1(U)\cong G$. (End-sum together a copy of $S^1\times\R^{n-1}$ for each generator, then add a 2-handle at infinity for each relator.) Let $V$ be obtained from $U$ by adding further 2-handles at infinity to kill $\ker f$. Then $\pi_1(V)\cong H$ and inclusion $U \hookrightarrow V$ induces $f$ on $\pi_1$. Apply Theorem~\ref{main}. The proof of (b) is similar, where the annuli for attaching the 2-handles of $V$ come from generating circles for $\ker f$ by following the gradient flow of $\varphi$ upward, avoiding critical points via the given index constraint.
\end{proof}

\begin{cor}\label{all} If an oriented $n$-manifold $V$ admits a proper Morse function $\varphi\co V\to[0,\infty)$ whose critical points have index at most two, then it is the quotient of a fixed-point free flow on $(\R^2-S)\times\R^{n-1}$, where $S$ is a discrete, closed subset no larger than the set of index-1 critical points of $\varphi$. The same holds for a nonorientable manifold, with $\R^2$ replaced by the M\"obius band and the bound on $S$ reduced by one.
\end{cor}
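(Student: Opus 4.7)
The plan is to build $V$ from $\R^n$ using Example~\ref{handles}(a), apply Theorem~\ref{main} to the resulting handle layers, and then identify the domain of the flow with the stated manifold.

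First, I would apply Example~\ref{handles}(a) to $\varphi$ and cancel all but one index-$0$ critical point against an equal number of index-$1$ critical points, producing a nest $\R^n=X_0\subset X_1\subset X_2=V$. Here $X_1$ is $\R^n$ with $k$ open $1$-handles attached at infinity along radial rays, where $k$ is at most the number of index-$1$ critical points of $\varphi$; the radial choice of rays is important when $n=3$ to rule out Myers's end-sum phenomena, per the parenthetical in Example~\ref{handles}(a). The manifold $V$ is then $X_1$ with $2$-handles attached at infinity, one for each index-$2$ critical point. Applied with $U=X_1$, Theorem~\ref{main} immediately yields a smooth, fixed-point free flow on $X_1\times\R$ with quotient $V$.

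It remains to identify $X_1\times\R$ with the stated domain. I would use isotopy of the radial attaching rays to place all $1$-handle attaching data inside the slice $\R^2\times\{0\}\subset\R^2\times\R^{n-2}=\R^n$, with $1$-handle cores parallel to the $\R^2$ factor and their framings trivial in the $\R^{n-2}$ directions. This exhibits $X_1$ as $\Sigma\times\R^{n-2}$, where $\Sigma$ is the $2$-dimensional surface obtained from $\R^2$ by attaching the same $k$ $1$-handles at infinity, with the same orientation classes. Hence $X_1\times\R\cong\Sigma\times\R^{n-1}$. In the orientable case $\Sigma$ is planar with $k$ extra ends, so by the classification of planar surfaces $\Sigma\cong\R^2-\{k\text{ points}\}$, giving $X_1\times\R\cong(\R^2-S)\times\R^{n-1}$ with $|S|=k$. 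In the nonorientable case $X_1$ must be nonorientable, since attaching $2$-handles preserves orientability in these dimensions, so at least one $1$-handle is orientation-reversing; sliding orientation-reversing $1$-handles over one another lets me assume exactly one is, and then $\Sigma$ is the open Möbius band with $k-1$ punctures, yielding $X_1\times\R\cong(M-S)\times\R^{n-1}$ with $|S|=k-1$, matching the claimed bound.

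The main obstacle I anticipate is making the reduction $X_1\cong\Sigma\times\R^{n-2}$ rigorous, by realizing the $2k$ radial attaching rays and their normal framings inside a single $\R^2$ slice while preserving each $1$-handle's orientation class. For $n\ge 4$ this is automatic by general position on the sphere $S^{n-1}$ of directions at infinity; for $n=3$ the radial-ray restriction from Example~\ref{handles}(a) bypasses Myers's knotted-ray obstruction; for $n=2$ there is nothing to reduce, since $X_1$ is already a surface and the identification is immediate from the classification of noncompact surfaces.
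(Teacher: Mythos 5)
Your overall plan matches the paper's: apply Example~\ref{handles}(a) to get the nest $\R^n=X_0\subset X_1\subset X_2=V$, feed $U=X_1$ into Theorem~\ref{main}, and then identify $X_1\times\R$ with the claimed domain. The divergence is in the last step, and there is a genuine problem there.

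The paper works entirely in the $(n{+}1)$-dimensional manifold $U\times\R$ and puts the $1$-handles into standard position \emph{there}, using that $n+1\ge 3$. You instead try to prove the stronger statement $X_1\cong\Sigma\times\R^{n-2}$ with $\Sigma$ a surface, and only then cross with $\R$. For $n\ge 3$ this can be pushed through: radial rays can be isotoped ambiently in $\R^n$ into $\R^2\times\{0\}$ with product framings and, crucially, with the cyclic order at infinity of your choosing (using the full $S^{n-1}$ of directions), so you can arrange $\Sigma\cong\R^2-S$ (or the M\"obius variant) as you say. But for $n=2$ the argument collapses, and the sentence ``for $n=2$ there is nothing to reduce, since $X_1$ is already a surface and the identification is immediate from the classification of noncompact surfaces'' is simply false: $X_1$ is whatever surface the Morse function produces, and if the index-$1$ critical points of $\varphi$ interleave (e.g.\ $V=T^2$ gives $X_1$ a once-punctured torus), then $X_1$ has positive genus and is not $\R^2-S$. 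The classification of noncompact surfaces does not rescue you --- it identifies $X_1$ as a punctured torus, not as a punctured plane. What \emph{is} true is that $X_1\times\R$ is diffeomorphic to $(\R^2-S)\times\R$ (both are interiors of genus-$k$ handlebodies), but you need to make that argument one dimension up, exactly as the paper does by invoking $n+1\ge 3$ so that the $1$-handles of $\R^{n+1}$ can be standardized regardless of how the corresponding $1$-handles sat in $\R^n$. So your proof needs the $n=2$ case to be rewritten along the paper's lines; as written it asserts something false.

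On the nonorientable case, your route (slide orientation-reversing $1$-handles over one another to reduce to a single one) differs from the paper's, which cuts $\R^{n+1}$ along a hyperplane separating the feet of the nonorientable handles and then repairs the cut with a single $1$-handle declared to be the unique nonorientable one. Both are legitimate ways to arrive at ``exactly one nonorientable $1$-handle,'' and both give the same count $|S|=k-1$; the paper's cut-and-repair version is arguably cleaner to justify in one stroke (the reorientation of one half-space flips all cross-handles at once), but the handle-slide argument is standard and fine provided you state it for the $1$-handles of $\R^{n+1}$, again sidestepping the $n=2$ pitfall above.
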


\begin{proof} We assume $n\ge2$, for otherwise the corollary is obvious (with the domain of the flow interpreted as $\R^1$ when $n=0$). Then Example~\ref{handles}(a) exhibits $V$ as the result of adding 2-handles at infinity to a manifold $U$ obtained from $\R^n$ by adding 1-handles at infinity along radial rays. It suffices to show that $U\times \R$ is diffeomorphic to the hypothesized manifold supporting the flow. This is easy when $U$ is orientable, since $n+1\ge 3$ so we can assume the 1-handles are in a standard position. If there are any nonorientable 1-handles, split $\R^{n+1}$ in half along a standard hyperplane, with the 1-handles positioned so that each nonorientable 1-handle has one foot on each side, and the orientable 1-handles all lie on one side. The complement of the hyperplane is then orientable. We can cut along the hyperplane and then repair the cut by attaching a 1-handle, which we view as the unique nonorientable member of a collection of 1-handles attached at infinity to $\R^{n+1}$. Now $U\times \R$ has the required nonorientable form.
\end{proof}

\begin{examples}\label{low} This corollary applies to every manifold of dimension two or less, and to every open 3-manifold. It applies to many 4-manifolds, for example every Stein surface, and to any 4-manifold after a suitable 1-complex is removed (the cocores of the 3- and 4-handles). The proof generalizes to allow arbitrary 2-handles at infinity, covering all of the exotic smooth structures on handlebody interiors discussed in Example~\ref{handles}(c), with the size of $S$ bounded by the number of 1-handles (less one in the nonorientable case). Again, we obtain infinitely many topologically but not smoothly equivalent flows when the quotient has nontrivial 2-homology. Every closed 5-manifold $X$ contains a separating 4-manifold for which the corollary applies to both components of the complement, with $S$ finite. If $X$ is simply connected, then both components are quotients of flows on $\R^6$. (Put a self-indexed Morse function $\varphi$ on $X$. If $X$ is simply connected, its index one critical points can be traded for some of index three, e.g.\ \cite{Mi}, and the same with Morse function $5-\varphi$ trades index four for two. Cut at $\varphi^{-1}(\frac52)$, and use $5-\varphi$ on the top part.)
\end{examples}


\section{Higher-index handles}\label{Higher}

We now generalize the main construction of the previous section to create invisible handles with indices larger than two. We expand Theorem~\ref{main} to include higher-index handles, provided that all  invisible handles are attached in a single layer. The rest of the previous section generalizes easily, although the previous discussion of Casson handles leads to a deeper investigation of topologically but not smoothly equivalent flows.

For higher-index handles, we will allow fixed points in our flow, raising the question of when the quotient can have a manifold structure near a fixed point. To retain the Hausdorff condition, we need all orbits near the fixed set $F$ to be circles or points. If we make the further assumption that these orbits are generated by a smooth circle action near $F$, then $F$ is a smooth submanifold. Choosing an $S^1$-invariant metric near $F$, we use the exponential map to identify a tubular neighborhood of $F$ with that of the 0-section of its normal bundle $\nu$, with the circle acting linearly on the fibers. For the quotient to be a manifold, each meridian sphere to $F$ should descend to a meridian sphere of its image in the quotient. This is a strong requirement, but is satisfied when the meridian is $S^3$ with the Hopf circle action. (One could also consider other Seifert circle actions on $S^3$.) Thus, we assume $F$ has codimension four with the normal circle action modeled by scalar multiplication on $\C^2$, which identifies $\nu$ as a Hermitian 2-plane bundle. (In our examples, the bundle will be trivial by construction.) The quotient map on each normal fiber is then modeled by a smooth (in fact, quadratic) map $h\co \C^2\to\C\times\R$, given by
$$h(z,w)=(2z\overline{w},|z|^2-|w|^2).$$
(It is easy to verify that $h$ maps the 3-sphere of radius $r$ to the 2-sphere of radius $r^2$, and each complex line through the origin to a real ray from the origin, collapsing each $S^1$-orbit to a point. Since each point preimage has constant values of $|z|^2\pm|w|^2$, it is easily seen to be a single $S^1$-orbit as required.) The standard $\rm{U}(2)$-action on $\C^2$ descends through $h$ to $\SO(3)$. Thus, the normal bundle $\nu$ descends to an $\SO(3)$-vector bundle $h_*\nu$ whose 0-section has a neighborhood identified with a neighborhood of the image $F_*$ of $F$ in the quotient space. This smooths the quotient manifold, with the submanifold $F_*$ having codimension three. The smoothing depends on our choice of invariant metric. (Changing how orthogonality pairs complex lines in a fiber of $\nu$ changes which pairs of rays in the quotient fit together as smooth lines.) However, any two invariant metrics are connected by a path of invariant metrics, so the smoothing on the quotient is unique up to a topological isotopy rel $F_*$ that is smooth off of $F_*$. Our quotient map can be thought of as a complex analog of a 2-fold covering map branched along a codimension-2 submanifold, where the latter quadratic map, given by $z\mapsto z^2$ in the normal directions, is viewed as taking the quotient of an $S^0$-action that collapses each meridian $S^1$ to $\R P^1$. Another quadratic comparison map is $z\mapsto |z|^2$ normal to a codimension-2 fixed set, which collapses out the obvious circle action to create a smooth manifold with boundary.

We can now generalize our invisible 2-handles. First note that our prototype for these handles comes from the Hopf map $S^3\to S^2$, by removing a closed ball from the domain to change it to $\R^3$. Specifically, if we take $U\times\R$ to be a neighborhood of a section over the northern hemisphere of $S^2$, then the circle bundle over the southern hemisphere is the invisible 2-handle. We will similarly construct a prototype of an invisible $k$-handle, $k>2$, using the $(k-2)$-fold suspension of the Hopf map, i.e.,  its join with the identity on $S^{k-3}$. This procedure yields a circle action on $S^{k+1}$, with fixed-point set $S^{k-3}$ having codimension four and the description given in the previous paragraph. Replacing the Hopf action on $S^3$ by this higher dimensional $S^1$-space extends our construction to higher-index handles.

\begin{thm}\label{high} Suppose that an $n$-manifold $V$ is obtained from another $n$-manifold $U$ by attaching a layer of handles at infinity, with indices at least two. Then there is a smooth flow on $U\times\R$ with quotient $V$ and quotient map homotopy equivalent to the embedding $i\co U\hookrightarrow V$. The fixed-point set consists of a properly embedded $S^{k-3}\times\R^{n-k}$ for each $k$-handle (where $S^{-1}=\emptyset$), with a neighborhood on which the quotient map is modeled by $h\times\id_{S^{k-3}\times\R^{n-k}}$, for $h\co\C^2\to\R^3$ as above.
\end{thm}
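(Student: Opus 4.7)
The plan is to adapt the construction of Theorem~\ref{main}, replacing the prototype $\R^2\times S^1$ for each 2-handle with a prototype $P_k$ built from the $(k-2)$-fold suspension of the Hopf fibration. First, realize $S^{k+1}$ as the unit sphere in $\C^2\times\R^{k-2}$ with the scalar $S^1$-action on the first factor; the quotient map is the $(k-2)$-fold suspended Hopf map, the fixed set is $S^{k-3}=S^{k+1}\cap(\{0\}\times\R^{k-2})$, and near it the quotient map is locally $h\times\id_{S^{k-3}}$, as set up in the introductory paragraphs of this section. Remove one free $S^1$-orbit $\gamma\subset S^3\times\{0\}$ (e.g., the unit circle in the first $\C$-factor) to obtain $P_k$; its $S^1$-quotient is $S^k\setminus\{\mathrm{pt}\}\cong\R^k$ and its fixed set is still $S^{k-3}$. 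Using the genus-one decomposition $S^{k+1}=(S^1\times D^k)\cup(D^2\times S^{k-1})$ with $\gamma=S^1\times\{0\}$, absorbing the punctured collar $S^1\times(D^k-\{0\})$ into $D^2\times S^{k-1}$ yields a diffeomorphism $P_k\cong\R^2\times S^{k-1}$, hence $P_k\times\R^{n-k}\cong S^{k-1}\times\R^{n-k+2}$. For $k=2$ this recovers the prototype $P_2=\R^2\times S^1$ of Theorem~\ref{main}.

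For each $k$-handle with attaching map $\psi\co(\R^k-D^k)\times\R^{n-k}\hookrightarrow U$ and image $W$, I glue $P_k\times\R^{n-k}$ onto $U\times\R$ along $W\times\R$ by the same twisted-embedding formula used in Theorem~\ref{main}. Over the annular base $\R^k-D^k$ the $S^1$-action on $P_k$ is free, and the principal bundle is trivial: the preimage of a small sphere $S^{k-1}$ linking the removed orbit image is the boundary of the tubular neighborhood $S^1\times D^k$ of $\gamma$ in $S^{k+1}$, hence is a trivial $S^1\times S^{k-1}$. Picking a section and letting $\theta$ denote a polar angle on the $\R^2$-factor of $P_k\cong\R^2\times S^{k-1}$, the formula
\[
(w,t)\mapsto(\psi^{-1}(w),\iota(t)+\theta(\psi^{-1}(w)))
\]
defines the gluing (with the $\R^{n-k}$-factor matched trivially), for $\iota\co\R\to S^1$ the embedding of Theorem~\ref{main}. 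The tangents to the $\R$-fibers on $U\times\R$ and to the $S^1$-orbits on $P_k\times\R^{n-k}$ fit into a global oriented line field on the glued space $X$, supporting a complete smooth vector field whose flow has quotient $V$, fixed set a disjoint union of copies of $S^{k-3}\times\R^{n-k}$, and the claimed local model $h\times\id_{S^{k-3}\times\R^{n-k}}$ near each fixed-set component.

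To identify $X\cong U\times\R$, precompose each gluing with the self-diffeomorphism $(r,\theta,x,\phi)\mapsto(r,\theta-\phi,x,\phi)$ of $P_k\times\R^{n-k}$ (supported in the annular region and extended by the identity toward the interior), exactly as in Theorem~\ref{main}. This sends the attaching annulus $\psi((\R^k-D^k)\times\{0\})\times\{0\}\subset U\times\R$ to a standard annulus inside $P_k\times\R^{n-k}\cong S^{k-1}\times\R^{n-k+2}$, and this product is precisely a tubular neighborhood of the half-open annulus $S^{k-1}\times[1,\infty)$, matching the analogous tubular neighborhood of the same annulus inside $U\times\R$. The absorption step of Theorem~\ref{main} then identifies $X$ with $U\times\R$. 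The homotopy equivalence between the quotient map and $i\co U\hookrightarrow V$ follows because each $P_k\times\R^{n-k}$ flow-retracts onto a neighborhood of its gluing end, on which the quotient map factors through $W\hookrightarrow U\hookrightarrow V$.

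The hardest part is the diffeomorphism step. For $k=2$ it is immediate since $P_2=\R^2\times S^1$ is a product by construction. For $k\ge3$ the identification $P_k\cong\R^2\times S^{k-1}$, implemented as an absorption of the punctured collar, must be chosen smoothly compatibly with the twist coordinates $(r,\theta,\phi)$ and with the position of the fixed $S^{k-3}$. Since the fixed set sits inside the $D^2\times S^{k-1}$ piece of the genus-one decomposition (where the absorption is the identity), while the attaching annulus sits inside the collar (where the absorption does its work), these compatibilities can be arranged locally and extended globally, reducing the argument to the tubular-neighborhood absorption used in Theorem~\ref{main}.
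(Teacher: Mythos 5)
Your overall strategy matches the paper's: both start from the $(k-2)$-fold suspension of the Hopf action on $S^{k+1}\subset\C^2\times\R^{k-2}$, carve out an invariant piece diffeomorphic to $\R^2\times S^{k-1}$ that maps onto the open $k$-handle downstairs, glue it to $U\times\R$, and finish with the tubular-neighborhood absorption of Theorem~\ref{main}. The local model $h\times\id$ at the fixed $S^{k-3}$ and the $k=2$ specialization are identified correctly. So there is no disagreement in the geometric picture.

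However, the implementation has a genuine gap at the step you yourself flag as ``the hardest part.'' For $k=2$ the manifold $\R^2\times\R^{n-2}\times S^1$ really is a product of the base with an $S^1$ fiber, so the angle $\theta$ appearing in the gluing $(w,t)\mapsto(\psi^{-1}(w),\iota(t)+\theta\circ\psi^{-1}(w))$ is the polar angle on the $\R^2$ \emph{base}, and $\phi$ in the shear $(r,\theta,x,\phi)\mapsto(r,\theta-\phi,x,\phi)$ is the honest $S^1$ \emph{fiber} coordinate. For $k>2$ neither of these roles survives: the base $\R^k$ has no polar angle $\theta$ (so $\theta\circ\psi^{-1}(w)$ does not typecheck, since $\psi^{-1}(w)$ lies in $U$, not in $P_k$), and in $P_k\cong\R^2\times S^{k-1}$ the circle orbits are \emph{diagonal} in the $\R^2$ and $S^{k-1}$ directions rather than a product $S^1$ factor, so there is no canonical $\phi$ for which the shear formula makes sense. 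Writing ``supported in the annular region and extended by the identity'' does not repair this: the shear you write down is not the identity on the overlap, so the extension is not automatic, and the last paragraph's assertion that ``these compatibilities can be arranged locally and extended globally'' is exactly the content that needs to be proved. The paper sidesteps all of this by never trivializing $P_k$ as a product: it takes the local slice $\R_+\times\C\times\R^{k-2}$, which cuts out an explicit section $\sigma_k$ over a disk in the quotient and an explicit annulus $H_k\cap(\R_+\times\C\times\R^{k-2})$ meeting $\partial H_k$ in the attaching sphere $\Sigma_k$, and it glues along this concrete annulus so that the absorption of Theorem~\ref{main} applies verbatim. If you want to retain your coordinate-based formulation, you would need to replace the bare appeal to Theorem~\ref{main}'s formulas by an explicit choice of section of the free part of the $S^1$-action over the attaching region (the slice $z_1\in\R_+$ is the natural one), and verify that the resulting annulus in $P_k\times\R^{n-k}$ has the tubular-neighborhood property directly, rather than by pattern-matching the $k=2$ shear.
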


\begin{proof} First, we build a model invisible compact $k$-handle $H_k$ with $k=n\ge3$. Starting from the map $h\times \id_{\R^{k-2}}\co \C^2\times \R^{k-2}\to\R^3\times\R^{k-2}$, restrict and radially project to get $h_k\co S^{k+1}\to S^k$, the quotient map of the circle action on the unit sphere in $\C^2\times \R^{k-2}$ (where the circle acts trivially on $\R^{k-2}$). Topologically, this is the join of $h|S^3$ with the identity on $S^{k-3}\subset\R^{k-2}$, so the fixed set is $S^{k-3}$ with a local model of the form $h\times\id_{S^{k-3}}$. We alternatively view $S^{k+1}$ as the join of $S^1$ with $S^{k-1}$: Let $D\subset\C$ be the disk of radius $\frac12$ about 0, and let $H_k$ denote $S^{k+1}\cap(D\times\C\times\R^{k-2})$, an equivariant tubular neighborhood in $S^{k+1}$ of the unit sphere $S^{k-1}\subset\{0\}\times\C\times\R^{k-2}$ containing the fixed $S^{k-3}$. This will be our lifted $k$-handle. Its complement $S^{k+1}-\int H_k$ is a tubular neighborhood in $S^{k+1}$ of the unit circle of $\C\times\{0\}\times\{0\}$, with free circle action identifying it as a (necessarily trivial) circle bundle over a disk  $\Delta^k\subset S^k$. Thus, $h_k(H_k)=S^k-\int\Delta^k$ can be viewed as a $k$-handle attached to $\Delta^k$. But the local slice $\R_+\times\C\times\R^{k-2}$ of the circle action intersects $S^{k+1}-\int H_k$ in a local section $\sigma_k$ projecting diffeomorphically to $\Delta^k$, so its intersection $\Sigma_k=\partial\sigma_k$ with $\partial H_k$ projects diffeomorphically to $\partial\Delta^k$. Since $\Sigma_k$ has the form $S^{k+1}\cap(\{\frac12\}\times\C\times\R^{k-2})$, it follows that the pair $(H_k,\Sigma_k)$ is diffeomorphic to $(S^{k-1}\times D^2,S^{k-1}\times \{1\})$, so $H_k$ is an invisible $k$-handle with attaching sphere $\Sigma_k$, attached to a tubular neighborhood of the section $\sigma_k$ in $S^{k+1}-\int H_k$.

It is now routine to adapt the proof of Theorem~\ref{main}. The annulus $H_k\cap(\R_+\times\C\times\R^{k-2})$ intersects $\partial H_k$ transversely in $\Sigma_k$ and projects diffeomorphically to its image in the disk $h_k(H_k)$, so we can use it to glue the invisible open $k$-handle $\int H_k\times\R^{n-k}$ to $U\times\R$ as necessary (with the $k=2$ case reducing to the invisible 2-handles of the previous proof).
\end{proof}

As in the previous section, we immediately obtain a plethora of new examples.

\begin{cor}\label{highapps} (a) If an $n$-manifold $V$ is obtained from a contractible manifold $U$ by attaching a layer of handles at infinity with indices at least two, then $V$ is the quotient of some flow on $\R^{n+1}$.

(b) There is a flow on $\R^{n+1}$ with quotient a simply connected, open $n$-manifold $V$ realizing any preassigned  countably generated, free abelian homology groups $H_k(V)$, $2\le k<n$, any bilinear intersection pairings between  $H_k(V)$ and $H_{n-k}(V)$ ($2k\ne n$), and any skew-symmetric (for $n\equiv 2$ mod 4) or even symmetric (for $n$ divisible by 4) form in the middle dimension. If the homology is finitely generated, then $V$ is the interior of a compact manifold with boundary.

(c) For fixed $k,n$ with $1\le k\le n-3$ and any homomorphism $f\co G\to H$ of countably generated abelian groups, with free abelian cokernel $F$ that vanishes if $k=1$, 
there is an oriented $(n+1)$-manifold with a flow for which the quotient is an $n$-manifold and the quotient map induces $f$ on $k$-homology.

(d) For fixed $k,n$ with $1\le k\le n-3$, every countably generated abelian group is $H_k(V)$ for some quotient $n$-manifold $V$ of a flow on $(\R^{k+1}-S)\times\R^{n-k}$ ($S$ discrete and closed). For $k>1$, $V$ is simply connected, and for $k=1$ every countably presented group is realized as $\pi_1(V)$.

(e) For $n\ge 5$, suppose a simply connected $n$-manifold $V$ has a proper Morse function $\varphi\co V\to[0,\infty)$ with critical points of index at most three, and only finitely many of index one. If $V$ admits a spin structure, then it is the quotient of a flow on $W=(\R^3-S)\times\R^{n-2}$, where $S$ is a discrete, closed subset no larger than the set of index-2 critical points of $\varphi$. If $V$ does not admit a spin structure, the same holds with the size of $S$ reduced by one but the twisted $\R^{n-1}$-bundle over $S^2$ end-summed onto $W$.
\end{cor}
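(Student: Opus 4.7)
The plan is to prove each of (a)--(e) by combining Theorem~\ref{high} with an explicit handle-theoretic construction of $V$ from a suitable base $U$, modeled on the machinery of Section~\ref{2h}.

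Part (a) should be immediate: Theorem~\ref{high} produces a flow on $U\times\R$ with quotient $V$, and when $U$ is contractible, $U\times\R$ is diffeomorphic to $\R^{n+1}$ by the McMillan--Perelman--Stallings result used in Corollary~\ref{contr}. For (b), I would take $U=\R^n$ in (a) and build $V$ by attaching $k$-handles at infinity for each $2\le k<n$: one $k$-handle per generator of the preassigned $H_k(V)$, with attaching spheres arranged and framed in $S^{n-1}$ to realize the linking pairings between $H_k$ and $H_{n-k}$, generalizing the inductive link construction of Example~\ref{handles}(b). For the middle-dimensional pairing, framings of $k$-handles live in $\pi_{k-1}(\mathrm{SO}(k))$; the self-intersection of a core is an Euler number whose image in $\Z$ is $2\Z$ when $k$ is even (accounting for the even-form restriction when $n\equiv 0\pmod 4$), while when $k$ is odd the pairing is automatically skew-symmetric (the $n\equiv 2\pmod 4$ case).

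For (c), I would first build $U$ with $H_k(U)\cong G$ by attaching $k$-handles at infinity to $\R^n$ (one per generator of $G$) together with $(k+1)$-handles realizing any torsion relations; both indices lie in the allowed range since $1\le k\le n-3$. Then I would extend to $V$ by further attaching $(k+1)$-handles along generators of $\ker f$ and, when $k\ge 2$, $k$-handles for the free cokernel $F$, so that the inclusion $U\hookrightarrow V$ induces $f$ on $H_k$; Theorem~\ref{high} then produces the flow. When $k=1$ the condition $F=0$ is forced by $\pi_1$-surjectivity of quotient maps recalled in the introduction. For (d), I would take $U=(\R^{k+1}-S)\times\R^{n-k-1}$ so that $U\times\R$ is the prescribed domain; since $\R^{k+1}-S$ is homotopy equivalent to a wedge of $|S|$ copies of $S^k$, one has $H_k(U)\cong\Z^{|S|}$, and choosing $|S|$ as the size of a generating set for the target countable abelian group $G$ and attaching $(k+1)$-handles at infinity to introduce the relations yields $V$ with $H_k(V)\cong G$; Theorem~\ref{high} supplies the flow. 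The same scheme realizes any countably presented fundamental group when $k=1$.

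For (e), I would begin with the Morse decomposition, writing $V$ as $\R^n$ with $m_1$ 1-handles, $m_2$ 2-handles, and some 3-handles at infinity. Since $V$ is simply connected, $n\ge 5$, and only finitely many 1-handles occur, Smale's trading trick converts each 1-handle into a 3-handle without altering $m_2$. Given $|S|\le m_2$, I would next insert $|S|$ cancelling 2/3-handle pairs whose 2-handles are unknotted and unlinked in $S^{n-1}$, then absorb exactly these $|S|$ trivial 2-handles into a new base $U$. A standard identification shows $U\cong(\R^3-S)\times\R^{n-3}$ (the complement of $|S|$ parallel $(n-3)$-planes in $\R^n$), whence $U\times\R=W$; since all remaining handles of $V$ have index 2 or 3, Theorem~\ref{high} delivers the flow. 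In the non-spin case one 2-handle of $V$ carries the $w_2$ obstruction and cannot be made trivial, but end-summing the twisted $\R^{n-1}$-bundle over $S^2$ onto $W$ supplies a base manifold absorbing this single twisted 2-handle at the cost of reducing $|S|$ by one. The main technical obstacle will be (b), where one must check that the middle-dimensional framing constraints imposed by the Hopf-type lift of Theorem~\ref{high} match the stated even and skew-symmetric restrictions rather than being strictly tighter; part (e) also requires careful bookkeeping to ensure that the $w_2$ obstruction of $V$ is exactly absorbed by the twisted-bundle end-summand of the domain.
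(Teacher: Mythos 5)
Parts (a), (c), and (d) follow the same route as the paper: (a) is immediate from Theorem~\ref{high} together with the observation that $U\times\R\cong\R^{n+1}$, and for (c) and (d) the paper likewise builds $U$ from $\R^n$ (resp.\ from $(\R^{k+1}-S)\times\R^{n-k-1}$) by adding $k$- and $(k+1)$-handles at infinity to realize $G$, then adds a single further layer of index-$k$ and index-$(k+1)$ handles to realize $f$ and applies Theorem~\ref{high}. In (b) you correctly flag the middle-dimensional framing issue as the crux, but the factual claim that the Euler number map $\pi_{k-1}(\mathrm{SO}(k))\to\Z$ has image $2\Z$ for all even $k$ is false for $k\in\{2,4,8\}$ (Hopf invariant one); more to the point, the corollary only needs even self-intersections to be \emph{realizable}, and the paper achieves this constructively by fiber-summing each $k$-handle with copies of the tangent bundle of $S^k$ (Euler number $2$), while you leave the obstacle unresolved. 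The paper also handles the off-diagonal pairings by tubing together meridians (possible since they have codimension at least two) and isolating the $(n-1)$-handles; you gesture at "generalizing the inductive link construction" without these details.

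The real gap is in (e). You propose inserting $|S|$ canceling $2/3$-handle pairs and taking $U$ to be the $0$-handle together with only the \emph{new} trivial $2$-handles, declaring that "all remaining handles of $V$ have index 2 or 3, Theorem~\ref{high} delivers the flow." But Theorem~\ref{high} requires $V$ to be obtained from $U$ by a \emph{single} layer of handles, meaning the handles' images are pairwise disjoint and each meets $i(U)$ only in its attaching region $(\R^k-D^k)\times\R^{n-k}$. The original $3$-handles of $V$ generically run over the original $2$-handles, so the "remaining handles" (original $2$-handles, original $3$-handles, new $3$-handles) do not form a single layer attached to your $U$: a $3$-handle whose attaching sphere passes over an original $2$-handle cannot have its attaching annulus embedded in $i(U)$. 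The canceling-pair device therefore buys nothing. The paper instead takes $U$ to consist of the $0$-handle together with \emph{all} of the $2$-handles of $V$, so that only $3$-handles are attached in the layer; then $U\times\R$ is $\R^{n+1}$ with the corresponding $2$-handles attached at infinity, which (after trading $1$-handles for $3$-handles inside a simply connected sublevel set, using the finiteness hypothesis) is $(\R^3-S)\times\R^{n-2}$ in the spin case, with the non-spin case handled by drilling out an $(n-1)$-plane to isolate the single twisted $2$-handle. Your treatment of the $w_2$ obstruction inherits the same problem: since your $U$ contains only newly inserted trivial pairs, it never sees the twisted $2$-handle that must be absorbed by the twisted $\R^{n-1}$-bundle over $S^2$.
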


\begin{proof} These statements follow as do their analogs in the previous section, with (a) being immediate. For (b), it suffices to inductively construct framed links in $\partial D^n$ as in Example~\ref{handles}(b), by keeping $(n-1)$-handles isolated and otherwise tubing together meridians (which have codimension at least two, so the tubes do not collide with anything and respect orientations). For $n=2k$ divisible by 4, we can arrange any even self-intersection numbers by fiber summing the $k$-handles with copies of the tangent bundle of $S^k$, which has Euler number two. For (c), build $U$ with $H_k(U)\cong G$ from $\R^n$ by adding a $k$-handle at infinity for each generator and then a $(k+1)$-handle at infinity for each relator. (Again this works since $k+1\le n-2$ so we can tube together copies of generating spheres.) Add a layer of handles to $U$ with indices $k+1$ and (if $k>1$) $k$ to get $V$ with $H_k(V)\cong f(G)\oplus F\cong H$, then apply Theorem~\ref{high}. Part (d) is similar. For (e), consider a simply connected sublevel set of $\varphi$ containing all of the index-1 critical points. By standard Morse theory \cite{Mi}, we can cancel some of these critical points with minima, and trade the rest for index-3 critical points. Thus, without loss of generality, we can assume that all critical points but the unique minimum have index two or three. As with Corollary~\ref{all}, we obtain a domain $U\times\R$ consisting of $\R^{n+1}$ with (possibly infinitely many) 2-handles radially  attached, standardly in the spin case, and reduce to that case by drilling out a suitable $(n-1)$-plane.
\end{proof}

\begin{Remarks}\label{highapprem}(i) Some odd forms in (b) ($n=2k=4m$) can also be realized, by end-summing $V$ with copies of a neighborhood of $\C P^m\subset\C P^k$ that lifts to $\R^{n+1}$ by Calcut's method, to add $\langle \pm 1\rangle$ summands.

 (ii) The two conclusions in (e) are not mutually exclusive. If $V$ fails to be spin but the twisted bundle glued to $W$ comes from a 2-handle in $V$ disjoint from the 3-handles, then the 2-handle can be lifted invisibly to realize $V$ as the quotient of a flow on $W$. (On the other hand, we cannot realize a spin quotient manifold $V$ of a flow on a nonspin manifold $W$, since the vanishing Stiefel-Whitney class $w_2(V)$ would pull back to zero on the complement of the fixed set of codimension greater than two, and then extend trivially over $W$ to show $w_2(W)=0$.) The finiteness condition in (e) is stronger than necessary. It is enough for $V$ to be the interior of a handlebody $H$ with handles of index at most three, such that $H$ (including its boundary) has endpoint compactification that is locally 1-connected at each endpoint. This allows the required handle trading with proper infinite collections of handles.
\end{Remarks}

Combining Theorem~\ref{high} with classical smoothing theory yields various new phenomena related to topological versus smooth equivalence of flows.

\begin{examples}\label{spheres} (a) Let $\Sigma$ be an exotic $n$-sphere, $n\ne 4$. (These exist for most values of $n\ge7$. There are finitely many in each dimension, comprising groups under connected sum that are well understood \cite{KM}.) One can realize $\Sigma$ as a 0-handle with an $n$-handle attached by an exotic diffeomorphism $\varphi$ of the boundary $(n-1)$-sphere. By Theorem~\ref{high}, $\Sigma$ is the quotient of some flow on $\R^{n+1}$. Since $\varphi$ extends homeomorphically over the 0-handle by coning, we obtain a homeomorphism from $S^n$ to $\Sigma$ that lifts to a topological equivalence between the corresponding flows on $\R^{n+1}$. The resulting finite family of topologically equivalent but smoothly inequivalent flows on $\R^{n+1}$ is qualitatively different from those encountered previously since it is stable under Cartesian product with $\R$: The h-Cobordism Theorem implies that whenever $\Sigma\times\R$ and $\Sigma'\times\R$ are diffeomorphic, so are $\Sigma$ and $\Sigma'$. On the other hand, exotic spheres are stably parallelizable, so Mazur's work \cite{Ma} implies that $\Sigma\times \R^{n+1}$ is always diffeomorphic to $S^n\times\R^{n+1}$. Thus, it is unclear whether smooth inequivalence is preserved under iterated stabilization. However, a weaker notion of inequivalence is preserved. High-dimensional smoothing theory classifies smoothings up to isotopy. Two smoothings on a fixed topological manifold are {\em isotopic} if there is a diffeomorphism between them that is topologically isotopic (homotopic through homeomorphisms) to the identity. Isotopy classes of smoothings in dimensions five and higher are classified by obstructions in cohomology, and Cartesian product with $\R$ induces a bijection of isotopy classes. In particular, nondiffeomorphic (equivalently, nonisotopic) smoothings on $S^n$ remain nonisotopic after iterated stabilization, although they eventually become diffeomorphic. In conclusion, if $\Sigma$ and $\Sigma'$ are nondiffeomorphic manifolds homeomorphic to $S^n$, then the above construction gives smooth flows on $\R^{n+1}$, that stabilize to flows on $\R^{n+1+m}$ with quotients diffeomorphic to $\Sigma\times\R^m$ and $\Sigma'\times\R^m$, respectively. The above also gives a topological equivalence $f\co\R^{n+1}\to\R^{n+1}$ stabilizing to one on $\R^{n+1+m}$ that allows us to topologically identify the quotients with each other and with $S^n\times\R^m$. For $m\le1$, there can be no smooth equivalence between the flows (since the quotients are nondiffeomorphic), and for general $m$ there is at least no smooth equivalence that is homotopic through topological equivalences to $f\times\id_{\R^m}$ (since the smoothings on the quotient $S^n\times\R^m$ are nonisotopic).

(b) In contrast to (a), all of the topologically equivalent but smoothly inequivalent flows that we have seen so far on 5-manifolds have been unstable under product with $\R$, with a smooth equivalence after product with $\R$ that is isotopic as above to the given topological equivalence. This reflects indistinguishability of the relevant smoothings of 4-manifolds by classical smoothing obstructions: The unique obstruction below dimension seven is $H^3(V;\Z/2)$, which vanished in our examples from Section~\ref{2h}. However, we can also construct examples with quotients distinguished by this obstruction, so stable up to isotopy as in (a). There are uncountably many diffeomorphism types of smoothings on $S^3\times\R$, in each of the two stable isotopy classes (the latter detected by $H^3(S^3\times\R;\Z/2)\cong\Z/2$). We can assume each smoothing is standard near some line $\{p\}\times\R$, whose complement $R$ is a possibly exotic smoothing of $\R^4$. (See Chapter~8 of Freedman and Quinn \cite{FQ} for background.) The original manifold is then exhibited as $R$ with a 3-handle attached at infinity, so by Corollary~\ref{highapps}(a) it is the quotient of some flow on $\R^5$. We obtain uncountably many such smoothly inequivalent flows, which are all topologically equivalent since they come from $\R^4$ with a 3-handle attached along a topologically standard annulus. After Cartesian product with $\R^m$, $m\ge1$, we are left with two isotopy classes of smoothings on $S^3\times\R^{1+m}$ (which are diffeomorphic for large $m$ by Mazur's argument). The corresponding flows on $\R^{5+m}$ fall into two classes, under smooth equivalence homotopic as above to the given topological equivalence. (They are equivalent when not distinguished by the smoothing obstruction, since then the obvious diffeomorphism of the 3-handles extends smoothly over $R\times\R^m$, isotopic rel the 3-handle to the given homeomorphism, and this whole description lifts to $\R^{5+m}$ since the flow is trivial over $R\times\R^m$.)

(c) It is unknown whether exotic smoothings exist on $S^4$ (the smooth 4-dimensional Poincar\'e Conjecture). If any exist, they require more than two handles in any handle decomposition, so (a) does not apply directly. However, we could still exhibit them as a 4-handle attached to an exotic 4-ball, or a 4-handle attached at infinity to a contractible open 4-manifold, so they would still arise as quotients of flows on $\R^5$. We would again have topologically equivalent flows on $\R^5$ that are unstably smoothly inequivalent.

(d) The method of Examples~\ref{examples}(b) and \ref{low} generalizes to higher-index exotic handles. In those examples, we saw that invisible 2-handles on quotient 4-manifolds can be replaced by invisible exotic 2-handles, namely Casson handles thought of as 2-handles attached exotically at infinity. A single invisible 2-handle can sometimes be replaced by Casson handles yielding uncountably many diffeomorphism types (via \cite{MinGen}~Theorem~7.1 or 7.7), so we get uncountably many topologically equivalent but smoothly inequivalent flows. Similarly, (b) and (c) above result in invisible exotic 3-handles and possibly 4-handles on quotient 4-manifolds, with the 3-handles sometimes yielding uncountably many diffeomorphism types. The only difference from the case of Casson handles is that these will be handles attached at infinity after the smooth structure of the affected 4-manifold is possibly changed by end-sum with an exotic $\R^4$. (Compare with Example~\ref{handles}(c), which uses the fact that a Casson handle interior is diffeomorphic to $\R^4$. The analogous statement for 3- and 4-handles may fail.) However, this does not cause difficulties since the exotic $\R^4$ will still be invisible when lifted to the manifold supporting the flow. These 4-dimensional phenomena will vanish under product with $\R$, except for the $\Z/2$ arising in (b). However, in higher dimensions, (b) and  (a) above yield an exotic invisible handle of index three, and finitely many of most indices seven and higher. The smoothings realized on a given $k$-handle correspond bijectively to the obstruction group $H^k(D^k\times\R^{n-k},\partial D^k\times\R^{n-k};\Theta_k)\cong\Theta_k$, where $\Theta_k$ is the group of smoothings of $S^k$ (or $S^k\times\R^2$ if $k=3$) up to isotopy. Thus, we can use the classical invariants to classify the resulting smoothings up to isotopy, obtaining families (finite if there are finitely many handles) of flows related by topological equivalences not isotopic to smooth ones, as in (a). For example, we can find such families of flows on $\R^7$ or $\R^{15}$ realizing any skew-symmetric form on a nontrivial, countably generated, free $\Z$-module as the intersection form of the quotient manifold. In general, more work is required to determine when the quotient manifolds are nondiffeomorphic, to guarantee that the flows are smoothly inequivalent.

(e) The other side of classical smoothing theory is a complete (when $n>4$) set of cohomological obstructions to the existence of smooth structures on topological manifolds. In our setting, these give rise to {\em topological} flows (1-parameter groups of homeomorphisms) on smooth manifolds, that are not conjugate by any homeomorphism to a smooth flow on a manifold (although locally they are, near any orbit). Suppose, for example, that $n=2k>4$ with $k$ even. Let $U$ be obtained by plumbing together eight copies of the tangent bundle of $S^k$ according to the $E_8$ graph. This is a well-known parallelizable $n$-manifold with the homotopy type of a wedge of eight $k$-spheres and intersection form $E_8$, and it is the interior of a compact manifold whose boundary is the Milnor exotic $(n-1)$-sphere $\Sigma$. We think of $U$ as the interior of a handlebody with one 0-handle and eight $k$-handles. Then $U\times\R$ has this same handle description, only the bundles over $S^k$ are trivial and the handles cannot link in these dimensions, so $U\times\R$ is diffeomorphic to $(\R^{k+1}-8{\rm\ points})\times\R^k$. Let $X=U\cup\{\infty\}$ be the one-point compactification of $U$. (Equivalently, we cone off the boundary $\Sigma$.) Then $X$ is a closed topological $n$-manifold that admits no smoothing (since there is no smooth, almost-parallelizable $n$-manifold with signature eight). Its stabilization $X\times\R$ is also unsmoothable, since its existence obstruction in $H^n(X;\Theta_{n-1})\cong\Theta_{n-1}$ (which is given by $\Sigma$) is stable under product with $\R$. However, if we remove the line $\{\infty\}\times\R$, we recover $U\times\R$, which has a smoothing to which the obvious topological flow on $X\times\R$ restricts smoothly. Applying Theorem~\ref{high} to an $n$-handle of $X$ that lies smoothly in $U$, we obtain a topological flow on a manifold homeomorphic to $U\times\R\approx(\R^{k+1}-8{\rm\ points})\times\R^k$ that cannot be topologically conjugate to a smooth flow on a smooth manifold since its quotient is $X$. (Smoothing the flow would give a smooth circle action near the fixed set. We could model this as usual by a $\rm{U}(2)$-bundle to push the smoothing down to $X$.) This flow stabilizes to unsmoothable flows on $(\R^{k+1}-8{\rm\ points})\times\R^m$ for all $m\ge k$.
\end{examples}

Corollary~\ref{all} showed that if an orientable 4-manifold is the interior of a handlebody lacking 3- and 4-handles, then it is the quotient of a flow on $\R^5$ minus parallel 3-planes. However, manifolds with 3-homology were excluded. We now show that a much larger range of  4-manifolds satisfies the same conclusion, and that the result is unaffected by putting exotic smooth structures on these manifolds.

\begin{cor}\label{last} Suppose $U$ is a 4-dimensional handlebody interior with handles only of index zero and one. Let $V$ be obtained from $U$ by topologically adding a layer of handles at infinity and then putting any smooth structure on the result. If $V$ is orientable, then it is the quotient of a smooth flow on $(\R^2-S)\times\R^3$ for some discrete, closed $S$. The same holds for nonorientable $V$ with $\R^2$ replaced by a M\"obius band. If $U$ is instead a contractible 4-manifold, and $V$ is simply connected, then $V$ is the quotient of a flow on $\R^5$.
\end{cor}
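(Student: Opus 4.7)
My plan is to combine three ideas: reorganize the topological handle layer so only indices $\ge 2$ remain, absorb the arbitrary smooth structure on $V$ into end-sums with exotic $\R^4$'s so Theorem~\ref{high} applies, and identify the resulting 5-dimensional domain with the target by elementary handle theory.

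Any 0-handle of the topological layer is a disjoint $\R^4$ and any 1-handle is an end-sum, so both absorb into $U$ to yield a new 4-dimensional handlebody interior $U_1$ still built from only 0- and 1-handles. For the remaining handles of index $\ge 2$, the ideas of Example~\ref{handles}(c) for 2-handles and Example~\ref{spheres}(d) for 3- and 4-handles show that any topologically attached handle with arbitrary smoothing can be rewritten as a standardly smooth handle attached after end-summing the ambient 4-manifold with an appropriate exotic copy of $\R^4$. This presents $V$ as $U_2 \cup (\text{smooth handles of index} \ge 2)$, with $U_2$ an end-sum of $U_1$ with (possibly infinitely many) copies of $\R^4$. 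Theorem~\ref{high} then produces a smooth flow on $U_2 \times \R$ with quotient $V$. Since $R \times \R \cong \R^5$ for any copy $R$ of $\R^4$, end-summing becomes invisible after multiplying by $\R$, so this flow actually lives on $U_1 \times \R$.

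Finally, I would identify $U_1 \times \R$ with $(\R^2 - S)\times\R^3$ in the orientable case by comparing 5-dimensional handle structures: both manifolds are interiors of 5-dimensional handlebodies built from a single 0-handle and a (possibly infinite) collection of 1-handles, and in dimension five such interiors are determined up to diffeomorphism by their fundamental group and orientation character, since attaching regions $S^0 \times D^4$ carry no nontrivial isotopy classes. Taking $|S|$ to equal the number of 1-handles of $U_1$ matches the two descriptions. The nonorientable case replaces $\R^2$ with the M\"obius band, since pairs of nonorientable 1-handles combine to orientable ones in dimension five, so a single M\"obius twist suffices. For the contractible case, $U \times \R \cong \R^5$ by Corollary~\ref{contr}; simple-connectedness of $V$ permits a preliminary Morse-theoretic rearrangement of the given layer that topologically trades away any 1-handles for 3-handles, so $U_1 = U$ and the target remains $\R^5$. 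The main technical obstacle is the smoothing absorption for 3- and 4-handles, which Example~\ref{spheres}(d) notes is more delicate than the 2-handle/Casson-handle case since exotic 3- or 4-handle interiors need not be diffeomorphic to $\R^4$; however, any such discrepancy becomes invisible upon multiplication by $\R$, which is the only environment in which the flow is constructed.
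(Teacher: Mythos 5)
Your outline is in the right spirit — absorb the low-index handles, reduce to Theorem~\ref{high}, and identify $U_1 \times \R$ by handle counting in dimension five — and the last step is essentially the argument from Corollary~\ref{all}. But the crucial middle step, where you assert that ``any topologically attached handle with arbitrary smoothing can be rewritten as a standardly smooth handle attached after end-summing the ambient 4-manifold with an appropriate exotic copy of $\R^4$,'' is not established by Examples~\ref{handles}(c) and \ref{spheres}(d). Those examples analyze specific constructions (replacing a 2-handle by a Casson handle, or the particular exotic handles arising from exotic $S^3 \times \R$'s); they do not show that an \emph{arbitrary} smooth structure on the topological manifold $V$ decomposes in that way. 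For a 2-handle this is precisely the subtle point: the cocore is a codimension-2 surface in a 4-manifold, and there is no general-position argument that smooths it, so one cannot simply declare that the attaching annulus and handle can be made smooth after an end-sum somewhere else.

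The paper's proof goes a different route that sidesteps the need for such a structural claim. Rather than trying to normalize the handles, it normalizes the \emph{cocores}: the goal is only to parametrize each topological $k$-handle so that its cocore $\{0\} \times \R^{4-k}$ is smoothly embedded in $V$. For $k \ge 3$ the cocore has codimension $\ge 3$ and this is routine smoothing theory, but for $k=2$ one uses \cite{FQ} Chapter~8 to decompose the cocore surface $C$ as an infinite cell complex that is smooth away from the center of each 2-cell, then connects those singular points by a properly embedded ray $\gamma \subset C$. One checks $V - \gamma \cong V$ (by isotoping the punctured ray to a smooth one), and in $V - \gamma$ the surface $C - \gamma$ is a smooth cocore. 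Once all cocores are smooth, removing them exhibits $V$ as obtained by attaching \emph{smooth} handles to a 4-manifold $U'$ that is $U$ with some extra 1-handles and a possibly exotic smooth structure. Theorem~\ref{high} then produces a flow on $U' \times \R$, and the exotic smoothing on $U'$ becomes standard after crossing with $\R$, so one lands on $(\R^2 - S) \times \R^3$ (or the M\"obius / $\R^5$ variants) exactly as you propose. So your steps 1, 3, 4, 5 are consistent with the paper, but step 2 needs to be replaced by the cocore-smoothing argument; the Casson-handle/end-sum picture you invoke is a description of particular families of smoothings, not of all of them.
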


\noindent For example, $V$ could be obtained from a 4-ball or connected sum of copies of $S^3\times S^1$, by multiply puncturing, adding 1-handles and then 2-handles at infinity (which can be assumed to avoid the 3-handles of the obvious handle decomposition). One expects many smoothings on $V$, and hence topologically but not smoothly equivalent flows.

\begin{proof} It suffices to topologically parametrize each 2-handle at infinity so that its cocore $\{0\}\times\R^2\subset\R^2\times\R^2$ is smoothly embedded in $V$. Then the previous method applies: Removing the smooth cocore of each handle of index two or more will give $U$ with extra 1-handles and a possibly exotic smooth structure, but after product with $\R$ the smoothing becomes standard. To smooth a 2-handle cocore $C$, decompose it as an infinite cell complex. We can then assume $C$ is smooth except at the center point of each 2-cell (\cite{FQ}~Chapter~8). Connect these singular points by a properly embedded ray $\gamma$ in $C$. Then $V-\gamma$ is diffeomorphic to $V$. (First remove the singular points from $V$, then smoothly isotope the remaining properly embedded open intervals of $\gamma$ to agree with those of a smooth ray $\gamma'$ in $V$, showing that $V-\gamma\approx V-\gamma'\approx V$.) But in $V-\gamma$, the surface $C-\gamma$ is exhibited as a smooth cocore of the topological 2-handle as required.
\end{proof}


\section{Further questions}\label{Further}

We have now seen that while 1-handles are tightly constrained by $\pi_1$-surjectivity of quotient maps, a layer of higher-index handles can be added to a suitable quotient manifold without disturbing the manifold supporting the flow. Surjectivity is the only constraint on $\pi_1$, and the higher homology and intersection pairing can be chosen flexibly, without obvious constraints. This leads to the following questions:

\begin{quess}\label{q} (a) Is every simply connected $n$-manifold realized as the quotient of a flow on $\R^{n+1}$?

(b) Is every oriented $n$-manifold realized as the quotient of a flow on $(\R^2-S)\times\R^{n-1}$ for some discrete, closed subset $S$?
\end{quess}

\noindent There is also a nonorientable analog of (b) with $\R^2$ replaced by the M\"obius band. As we have seen (Examples~\ref{low}), both versions of (b) have an affirmative answer for $n\le2$, for open 3-manifolds, and for various higher dimensional manifolds. The answer to (a) is also affirmative for homotopy spheres (Examples~\ref{spheres}), so for closed manifolds of dimensions three and below, and for contractible manifolds with a single layer of handles attached at infinity (Corollary~\ref{highapps}(a)). For each question, the set of affirmative open examples is closed under end sum and hence (for $n\ne2$) connected sum (the latter by adding an invisible $(n-1)$-handle onto the 1-handle at infinity). Compared with (b), Question (a) has the additional complication that 1-handles cannot be lifted invisibly. In dimensions five and up, 1-handles can usually be traded for 3-handles (cf.\ Remark~\ref{highapprem}(ii)), but dimensions three and four are harder. One might try to localize the index-1 critical points within a contractible subhandlebody to apply Corollary~\ref{highapps}(a), although this leads into difficult issues regarding group presentations already in the case with only finitely many critical points. Beyond this issue, to make further progress on any of these questions, one needs to be able to add multiple layers of invisible handles. This poses difficulties, since the attaching annuli of the second layer of handles need not lift. For example, an attaching sphere of the second layer may run over an invisible handle of the first layer. The second layer may also need to avoid the fixed points of the first layer, since otherwise it is more difficult to arrange the flows to fit together. Fortunately, there is a prototype for iterated handle attaching: The handles of $\C P^m$ with index at least two lift invisibly in Calcut's example, and those with index exceeding two attach along spheres in homotopy classes that do not lift. Perhaps some insight can be gained by studying these handles. Note that Calcut's flow has no fixed points, so the example may be useful in constructing fixed-point free invisible handles of even index four and higher. An intermediate question that tests iterated handle attaching is the following:

\begin{ques} Can a quotient of a flow on $\R^{n+1}$ have torsion in homology?
\end{ques}

\noindent Recall that by Corollary~\ref{highapps}(d), every countably generated abelian group is the $k$-homology of some manifold quotient of a flow on $(\R^{k+1}-S)\times\R^{n-k}$ ($S$ discrete and closed), so torsion can be created from some manifolds, but torsion from a contractible manifold requires two layers of invisible handles.

 The techniques of this paper should also be useful for studying higher dimensional foliations. The quotient map in this broader context will still be a $\pi_1$-surjection by \cite{CGM}, but one can obtain invisible 4-handles for 3-dimensional foliations, for example, from the quaternionic Hopf fibration $S^7\to S^4$. This realizes all symmetric intersection forms by quotient 8-manifolds of 3-dimensional foliations on $\R^{11}$. (Arbitrary self-intersections can be realized since there is a bundle over $S^4$ with Euler number one.)
 
 \section*{Acknowledgements}
The author would like to thank Jack Calcut for helpful discussions. Parts of this work were supported by NSF grant DMS-1005304.

\end{document}